\newcommand{\R}{\mathbb{R}}
\newcommand{\C}{\mathbb{C}}
\newcommand{\N}{\mathbb{N}}
\newtheorem{defin}{Definition}[section]
\newtheorem{theorem}[defin]{Theorem}
\newtheorem{exa}[defin]{Example}
\newenvironment{example}{\begin{exa}\rm}{\end{exa}}
\newtheorem{corollary}[defin]{Corollary}
\newenvironment{proof}
{\noindent{\it Proof.}}{\hfill $\Box$\par\vspace{2.5mm}}
\newtheorem{que}{Question}
\newtheorem{pro}{Problem}
\numberwithin{equation}{section}
\title{\bf\Large The growth of transcendental entire solutions of linear difference equations with polynomial coefficients}
\author{Xiong-Feng Liu\footnote{\ Liu is the corresponding author.}, Zhi-Tao Wen\footnote{\ Wen is supported  by the National Natural Science Foundation of China (No.~12471076) and LKSF STU-GTIIT Joint-research Grant (No. 2024LKSFG06).}~~and Can-Xin Zhu}
\date{}
\begin{document}
	
	\maketitle
	\begin{abstract}
In this paper, we study the growth of transcendental entire solutions of linear difference equations
	\begin{equation}
    P_m(z)\Delta^mf(z)+\cdots+P_1(z)\Delta f(z)+P_0(z)f(z)=0,\tag{+}
    \end{equation}
where $P_j(z)$ are polynomials for $j=0,\ldots,m$. At first, we reveal type of binomial series in terms of its coefficients. Second, we give a list of all possible orders, which are less than 1, and types of transcendental entire solutions of linear difference equations $(+)$.
In particular, we give so far the best precise growth estimate of transcendental entire solutions of order less than 1 of $(+)$, which improves results in \cite{CF2009,CF2016}, \cite{I&Y 2004}, \cite{I&W 2025}.
Third, for any given rational number $\rho\in(0,1)$ and real number $\sigma\in(0,\infty)$, we can construct
a linear difference equation with polynomial coefficients which has a transcendental entire solution of order $\rho$ and type $\sigma$. At last, some examples are illustrated for our main theorem.
		
		\medskip
		\noindent
		\textbf{Keywords:} Binomial series, linear difference equation, polynomial coefficient, type, order of growth.

		\medskip
		\noindent
		\textbf{2020 MSC:} 39A45; 39A22; 30D05.
	\end{abstract}

\section{Introduction}
Let us denote the difference operator by $\Delta f(z)=f(z+1)-f(z)$ for a given function $f$, and let $n$ be a nonnegative integer. Define $\Delta^n f(z)=\Delta(\Delta^{n-1} f(z))$ for $n\geq 1$, and write $\Delta^0 f=f$.
It is well known that the linear difference equation of order $m$
    $$
    a_m(z)\Delta^mf(z)+\cdots+a_1(z)\Delta f(z)+a_0(z)f(z)=0
    $$
with entire coefficients $a_j$, $j=0,1,\ldots,m$, has a system of $m$ meromorphic solutions which are linearly independent over the field of periodic functions with period one, see \cite[Theorem~1]{Praagman}. Hence, it can be studied by using the methods of complex analysis, and in particular those of theory of entire functions or Nevanlinna's theory of value distribution of meromorphic functions.

We now start with \cite[Pages 8--9]{Boas1954} or \cite[Pages~1--4]{Levin1964} to recall the study of the growth of entire function $f(z)$. The {\it order of growth} $\rho=\rho(f)$ is defined by
\begin{equation*}
\rho=\limsup_{r\to\infty}\frac{\log\log M(r,f)}{\log r},\quad 0\leq \rho\leq \infty.
\end{equation*}
It is known that when the order $\rho$ is finite, it yields that for every $\varepsilon>0$
\begin{equation*}
M(r,f)=O(e^{r^{\rho+\varepsilon}}),\quad\text{as $r\to\infty$},
\end{equation*}
where $M(r,f)=\max_{|z|\leq r}|f(z)|$. When $0<\rho<\infty$, the type $\tau=\tau(f)$ is defined by
\begin{equation*}
\tau=\limsup_{r\to\infty}\frac{\log M(r,f)}{r^{\rho}},\quad 0\leq \tau\leq \infty.
\end{equation*}
In particular, a positive finite order entire function $f$ is said to be of \emph{minimum type} if $\tau(f)=0$, to be of \emph{mean type} if $\tau(f)\in(0,\infty)$, and to be of \emph{maximum type} if $\tau(f)=\infty$.

In~\cite[Theorem~1.1]{I&Y 2004} Ishizaki and Yanagihara showed that the order of growth of entire solutions $f$ of order $\rho(f)<1/2$ of the linear difference equation
    \begin{equation}\label{linear.eq}
    P_m(z)\Delta^mf(z)+\cdots+P_1(z)\Delta f(z)+P_0(z)f(z)=0
    \end{equation}
has a connection with Newton polygon, where $P_j$, $j=0,1,\dots, m$, $P_m(z)\not\equiv0$, are polynomials with degree $d_j$ for $j=0, 1, \dots, m$. In particular,
\begin{equation}\label{M.eq}
\log M(r,f) = Lr^{\rho(f)}(1+o(1)),
\end{equation}
where $L>0$. Chiang and Feng extended the condition on the order of growth to entire solutions of order strictly less than 1 by the different method, see \cite[Theorem~7.3]{CF2009} and \cite[Theorem~4]{CF2016}.

Ishizaki and the second author in \cite[Theorem~2.2]{I&W 2025} discussed whether
the possible orders given by the Newton polygon for the entire solutions of order less than 1 of the linear difference equation \eqref{linear.eq} could be attained or not, and constructed
all entire solutions of \eqref{linear.eq} by using binomial series and periodic functions. Moreover, Ishizaki and the second author gave the condition on existence of transcendental entire solutions of order less than 1 of difference equations \eqref{linear.eq} and showed a list of all possible orders $\rho(f)$ in \eqref{M.eq}.

The discussion above inspires us to go further study on the more precise growth estimate of transcendental entire solutions of order less than 1 of \eqref{linear.eq}. Our purpose of this paper is to give a list of the growth estimate of transcendental entire solutions of linear difference equations with polynomial coefficients, which includes the type of entire solutions. That is to determine the constant $L$ in \eqref{M.eq}.

In order to achieve our goal, we show the relation between the type of binomial series and its coefficients. It gives us a list of all possible orders, which are less than 1, and types of transcendental entire solutions of linear difference equations \eqref{linear.eq}. We show so far the best precise growth estimate of transcendental entire solutions of order less than 1 of \eqref{linear.eq}, which improves results in \cite{CF2009,CF2016}, \cite{I&Y 2004}, \cite{I&W 2025}. Further, we can construct
a linear difference equation with polynomial coefficients which has a transcendental entire solution of order $\rho$ and type $\sigma$ for any given rational number $\lambda\in(0,1)$ and real number $\sigma\in(0,\infty)$. At the end, some examples are illustrated for our main theorem.

\section{Main results}
Consider difference equations \eqref{linear.eq}. For $j=0,1,\ldots,m$,
we set $d_j=\deg P_j$, and
    \begin{equation}\label{P.eq}
    P_j(z)=A_{j,d_j}z^{d_j}+\cdots+A_{j,1}z+A_{j,0},
    \end{equation}
where $A_{j,i}\in\C$ for $0\leq i\leq d_j$, and $A_{j,d_j}\neq 0$.
We define a strictly decreasing finite sequence of non-negative integers
    \begin{equation}\label{sequence.eq}
    s_1>s_2>\cdots>s_p\geq 0
    \end{equation}
in the following manner. We choose $s_1$ to be the unique integer satisfying
    \begin{equation}\label{s1.eq}
    d_{s_1}=\max_{0\leq k\leq m} {d_k} \quad\text{and}\quad d_{s_1}>d_{k}\quad\text{for all}~0\leq k<s_1.
    \end{equation}
Then given $s_j$, $j\geq 1$, we define $s_{j+1}$ to be the unique integer satisfying
    \begin{equation}\label{sj.eq}
    d_{s_{j+1}}-s_{j+1}> d_{s_{j}}-s_{j}
    \end{equation}
and
    \begin{equation}\label{sj2.eq}
    d_{s_{j+1}}=\max_{0\leq k<s_j} {d_k} \quad\text{and}\quad d_{s_{j+1}}>d_{k}\quad\text{for all}~0\leq k<s_{j+1}.
    \end{equation}
For a certain $p$, the integer $s_p$ will exist, but the integer $s_{p+1}$ will not exist, and the sequence $s_1,s_2,\ldots,s_p$ terminates with $s_p$.
Obviously, $1\leq p\leq m$ and \eqref{sequence.eq} holds.

We mention that the integers $s_1,\ldots,s_p$ in \eqref{sequence.eq} could also be expressed in the following manner:
    $$
    s_1=\min\left\{j:~d_j=\max_{0\leq k\leq m}{d_k}\right\}
    $$
and given $s_{j}$ and $j\geq 1$, we define
    $$
    s_{j+1}=\min\left\{i:~ d_{i}-i> d_{s_{j}}-s_{j}\quad\text{and}\quad d_i=\max_{0\leq k<s_j}{d_k} \right\}.
    $$

From the definition of the sequence \eqref{sequence.eq},
we see that
    $
    d_{s_1}>d_{s_2}>\cdots>d_{s_p}.
    $
Moreover, it is obvious that
    $$
    d_{s_p}-s_p>\cdots>d_{s_2}-s_2>d_{s_1}-s_1.
    $$
Correspondingly, when $p\geq 2$, we define $j=1,2,\ldots,p-1$
    \begin{equation}\label{orderlist.eq}
    \rho_j=1+\frac{d_{s_{j+1}}-d_{s_{j}}}{s_{j}-s_{j+1}},\quad
    \end{equation}
and
    \begin{equation}\label{typelist.eq}
    L_j=\frac{1}{\rho_j}\left|\frac{A_{s_{j+1},d_{j+1}}}{A_{s_j,d_{s_j}}}\right|^{\frac{\rho_j}{(d_{s_{j+1}} - s_{j+1}) - (d_{s_j} - s_j)}}.
    \end{equation}
From \eqref{sequence.eq} to \eqref{typelist.eq}, we observe that $\rho_j$ is rational satisfying $0<\rho_j<1$ and $0<L_j<\infty$ for each $j$, $1\leq j\leq p-1$. Moreover, we see that
    $$
    1>\rho_1>\rho_2>\cdots>\rho_p>0.
    $$

By using the nations above, let us state our main result as follows, which improves the results in \cite{CF2009,CF2016}, \cite{I&Y 2004}, \cite{I&W 2025}.

\begin{theorem}\label{Thm.2}
Suppose that $p\geq 2$. We have the following statements.
\begin{itemize}
\item
For any given real pairs $(\rho_j, L_j)$ in \eqref{orderlist.eq} and \eqref{typelist.eq}, there exists at least one transcendental entire solution $f$ of \eqref{linear.eq} such that $\rho(f)=\rho_j$ and $\tau(f)=L_j$.
\item
If $f$ is a transcendental entire solution of order $\rho(f)<1$ of \eqref{linear.eq}, then
    $$
    \log M(r,f)=L_jr^{\rho_j}(1+o(1))
    $$
 for some $j=1,\ldots,p-1$, where $\rho_j$ and $L_j$ are defined in \eqref{orderlist.eq} and \eqref{typelist.eq}.
 \end{itemize}
\end{theorem}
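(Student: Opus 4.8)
The plan is to represent every entire solution as a binomial series and to read off its order and type from the asymptotics of the series coefficients, which are in turn dictated by the Newton-polygon data $s_1>\cdots>s_p$. First I would reduce to a pure binomial series. By the construction of Ishizaki and the second author in \cite{I&W 2025}, every entire solution of \eqref{linear.eq} is assembled from binomial-series building blocks with period-one coefficients. A non-constant entire function of period one (for instance a convergent exponential sum $\sum_k c_k e^{2\pi i k z}$) has order at least $1$; hence for a solution $f$ with $\rho(f)<1$ all periodic coefficients must be constant, so that $f(z)=\sum_{n\ge 0}a_n\binom{z}{n}$ for suitable $a_n\in\C$. This is the reduction that lets the problem be handled entirely at the level of the sequence $(a_n)$.

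Next I would derive the recurrence satisfied by $(a_n)$. Using $\Delta^k\binom{z}{n}=\binom{z}{n-k}$ together with the expansion $z^{i}\binom{z}{n}=\frac{(n+i)!}{n!}\binom{z}{n+i}+(\text{lower-index terms})$, I substitute $f$ into \eqref{linear.eq} and collect the coefficient of $\binom{z}{N}$. Writing $\delta_k=d_k-k$, the coefficient of $\binom{z}{N}$ is a finite-band linear relation among the $a$'s whose leading part reads
\[
\sum_{k=0}^m A_{k,d_k}\,\frac{N!}{(N-d_k)!}\,a_{N-\delta_k}+(\text{lower order in }N)=0,
\]
that is $\sum_k A_{k,d_k}N^{d_k}a_{N-\delta_k}\bigl(1+o(1)\bigr)=0$. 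Because the top term of $P_m\Delta^m f$ produces $a_{N+m}$, this is a genuine (well-posed) forward recurrence, consistent with the $m$-dimensional solution space.

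The heart of the argument is the dominant-balance analysis. For a sequence with $\log|a_n|\sim-\beta\,n\log n$, the term indexed by $k$ carries logarithmic weight $(1+\beta)d_k-\beta k$. The defining conditions \eqref{s1.eq}--\eqref{sj2.eq} guarantee that every index $k$ strictly between two consecutive vertices, $s_{j+1}<k<s_j$, gives a point $(k,d_k)$ lying \emph{strictly below} the segment joining $(s_{j+1},d_{s_{j+1}})$ and $(s_j,d_{s_j})$; since the weight is increasing in $d_k$, such terms are $o(\cdot)$ of the two vertex terms, and likewise the vertices outside this segment are subdominant for this value of $\beta$. Hence exactly the two vertex terms balance, which simultaneously pins the exponent $\beta=(d_{s_j}-d_{s_{j+1}})/g$ and the subleading constant, yielding the sharp two-term asymptotics
\[
\frac{a_{n+g}}{a_n}\sim-\,\frac{A_{s_{j+1},d_{s_{j+1}}}}{A_{s_j,d_{s_j}}}\;n^{\,d_{s_{j+1}}-d_{s_j}},\qquad g=(d_{s_{j+1}}-s_{j+1})-(d_{s_j}-s_j),
\]
for exactly one $j\in\{1,\dots,p-1\}$ (one checks $g/(s_j-s_{j+1})=\rho_j$). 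Feeding $\log|a_n|=-\beta\,n\log n+\gamma\,n+o(n)$ into the binomial-series type estimate announced in the introduction (via $\binom{z}{n}\approx z^n/n!$ and the classical coefficient formulas for order and type) then gives $\rho(f)=\rho_j$ and $\tau(f)=L_j$, i.e. $\log M(r,f)=L_jr^{\rho_j}(1+o(1))$. For the existence statement I would, for each $j$, invoke the order results of \cite{I&W 2025} (available since $p\ge 2$) to obtain a transcendental entire solution of order exactly $\rho_j$, whose type is then forced to be $L_j$ by the growth estimate just proved; alternatively one solves the exact coefficient recurrence forward with initial data selecting the segment-$j$ root and verifies directly that the resulting binomial series has the above asymptotics.

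The main obstacle is the precise-constant control in the third step. Fixing $\rho_j$ only requires the leading $n\log n$ term of $\log|a_n|$, whereas the type $L_j$ requires the next ($O(n)$) term $\gamma n$; thus the two-term balance must be upgraded from a formal WKB identity to a rigorous asymptotic with explicit multiplicative constant, uniformly absorbing both the subdominant vertices and all lower-degree coefficients $A_{k,i}$ with $i<d_k$. A secondary difficulty is ruling out solutions that straddle two segments: if $\log|a_n|$ mixed two rates, the larger-order piece would dominate $M(r,f)$, so a single $j$ must survive, but one must confirm via largest-term (maximum-modulus) estimates for binomial series that this dominant piece is not asymptotically cancelled and that discarding the lower-order pieces preserves the value of the type.
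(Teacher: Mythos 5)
Your outline follows, step for step, the route the paper itself takes: reduction of a solution of order less than $1$ to a single binomial series with constant coefficients (delegated to \cite{I&W 2025}), a banded coefficient recurrence, a Newton-polygon/dominant-balance analysis selecting consecutive pairs $(s_j,s_{j+1})$, and finally the coefficient-to-type formula of Theorem~\ref{Thm.1} to extract $L_j$. Your formal computations are correct and equivalent to the paper's: after the basis change $b_n=n!\,a_n$ between $\binom{z}{n}$ and $z^{\underline{n}}$, your ratio asymptotics
\begin{equation*}
\frac{b_{n+g}}{b_n}\sim-\,\frac{A_{s_{j+1},d_{s_{j+1}}}}{A_{s_j,d_{s_j}}}\,n^{\,d_{s_{j+1}}-d_{s_j}},\qquad g=(d_{s_{j+1}}-s_{j+1})-(d_{s_j}-s_j),
\end{equation*}
are exactly the paper's \eqref{a.eq} with $|\gamma_{j,t}|=\bigl|A_{s_{j+1},d_{s_{j+1}}}/A_{s_j,d_{s_j}}\bigr|^{1/g}$ coming from the characteristic equation \eqref{ch.eq}, and your weight bookkeeping $(1+\beta)d_k-\beta k$ is the paper's degree comparison \eqref{Tdegree.eq}--\eqref{4.13} after the substitution $a_{n+m}=x(n)/[(n+m)!]^{\mu}$ with $\mu_j=1/\rho_j=1+\beta_j$.

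The genuine gap is precisely the step you flag as the ``main obstacle'' and then leave open: you name no mechanism that upgrades the formal two-term balance to a rigorous asymptotic for actual solutions of the recurrence \eqref{recu.eq}, with control of the $O(n)$ term of $\log|a_n|$ --- which is what the type, unlike the order, requires. This is not routine, since \eqref{recu.eq} is of irregular (non-Poincar\'e) type: the coefficients $Q(n,i)$ have different degrees, intermediate shifts and the lower-degree coefficients $A_{k,i}$, $i<d_k$, must be absorbed uniformly (the paper does this via the degree bounds \eqref{Qdegree.eq}), and a given solution must be shown to actually realize one of the balances. The paper closes exactly this hole by invoking Adams's classical theory \cite{Adams 1928} of linear difference equations in the irregular case: the convex polygon of the points $\bigl(i,\deg Q(n,i-m)\bigr)$ has $p-1$ sides of slopes $\mu_j$, each side contributing $\bigl(d_{s_{j+1}}-s_{j+1}\bigr)-\bigl(d_{s_j}-s_j\bigr)$ solutions with the rigorous asymptotics \eqref{a.eq}, including residual factors $e^{L_j(n)}n^{r_j}$ with $L_j(n)$ a polynomial in a fractional power of $n$, which one must still check are harmless for the type (they are, since their contribution to $n|a_n|^{\rho_j/n}$ tends to $1$). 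Your second admitted difficulty --- that a solution of order less than $1$ cannot straddle two balances, nor have its dominant piece cancelled --- is likewise not resolved by your largest-term sketch; the paper settles it by citing the asymptotic $\log M(r,f)=Lr^{\rho}(1+o(1))$ of \cite{CF2009,CF2016} together with the classification in \cite[Theorem~2.2]{I&W 2025}, and obtains the existence half the same way you propose (order forced to equal some $\rho_j$, type then forced to be $L_j$). With those inputs supplied --- Adams's asymptotics and the Chiang--Feng/Ishizaki--Wen results --- your proposal becomes the paper's proof; without them it remains a correct formal scheme rather than a proof.
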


According to Theorem \ref{Thm.2}, we have the following corollary.

\begin{corollary}
If $f$ is a transcendental entire solution of order $\rho(f)<1$ of \eqref{linear.eq}, then
$f$ is of mean type.
\end{corollary}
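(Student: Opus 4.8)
The plan is to read off both the order and the type of $f$ directly from the sharp growth asymptotic already supplied by Theorem~\ref{Thm.2}, so that the corollary becomes a matter of translating that asymptotic into the language of order and type. If $p=1$, the index set $\{1,\ldots,p-1\}$ is empty, so by the second statement of Theorem~\ref{Thm.2} there are no transcendental entire solutions of order $<1$ and the corollary holds vacuously. Assume therefore $p\geq 2$. Since $f$ is a transcendental entire solution of order $\rho(f)<1$ of \eqref{linear.eq}, the second statement of Theorem~\ref{Thm.2} yields an index $j\in\{1,\ldots,p-1\}$ for which
\begin{equation*}
\log M(r,f)=L_jr^{\rho_j}(1+o(1)),\quad r\to\infty,
\end{equation*}
where $\rho_j$ and $L_j$ are as in \eqref{orderlist.eq} and \eqref{typelist.eq}.

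First I would recover the order from this asymptotic. Taking logarithms of the right-hand side gives $\log\log M(r,f)=\rho_j\log r+\log L_j+\log(1+o(1))$, so dividing by $\log r$ and letting $r\to\infty$ forces $\rho(f)=\rho_j$. In particular $0<\rho(f)<1$, so the type $\tau(f)$ is well defined. Next I would compute the type: dividing the displayed asymptotic by $r^{\rho_j}=r^{\rho(f)}$ gives $\log M(r,f)/r^{\rho(f)}=L_j(1+o(1))\to L_j$, whence $\tau(f)=L_j$.

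Finally, I would invoke the observation recorded just after \eqref{typelist.eq} that $0<L_j<\infty$ for every $j$ with $1\leq j\leq p-1$. Consequently $\tau(f)=L_j\in(0,\infty)$, which is exactly the assertion that $f$ is of mean type. I do not expect any genuine obstacle here, since all the analytic content is carried by Theorem~\ref{Thm.2}; the only points requiring care are confirming that the asymptotic pins down both $\rho(f)$ and $\tau(f)$ uniquely and that $L_j$ lies strictly between $0$ and $\infty$, both of which are immediate from the definitions and the preceding discussion.
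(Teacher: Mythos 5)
Your proposal is correct and is essentially the paper's own (implicit) argument: the corollary is read off from the second bullet of Theorem~\ref{Thm.2} by extracting $\rho(f)=\rho_j$ and $\tau(f)=L_j$ from the asymptotic $\log M(r,f)=L_jr^{\rho_j}(1+o(1))$ and using the observation after \eqref{typelist.eq} that $0<L_j<\infty$. One small caution: in the $p=1$ case you cannot formally ``invoke'' Theorem~\ref{Thm.2}, since it is stated under the hypothesis $p\geq 2$; the vacuity there instead rests on the cited result of Ishizaki and Wen \cite{I&W 2025} that \eqref{linear.eq} has no transcendental entire solutions of order less than $1$ when $p=1$.
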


Theorem \ref{Thm.2} shows that if $f$ is an transcendental entire solution of order of growth $\rho(f)<1$ of linear difference equations with polynomial coefficients, then $f$ is of rational order and mean type. The next theorem reveals that we can construct a linear difference equation with polynomial coefficients, which has an entire solution of any rational order between 0 and 1 and any mean type.

\begin{theorem}\label{construct.theorem}
For any positive rational number $\lambda\in(0,1)$ and real number $\sigma\in(0,\infty)$, there exists
a linear difference equation of the form \eqref{linear.eq} with polynomial coefficients such that $f$ is an entire solution of \eqref{linear.eq}, whose order is $\rho(f)=\lambda$ and type is $\tau(f)=\sigma$.
\end{theorem}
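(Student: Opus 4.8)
The plan is to derive Theorem~\ref{construct.theorem} directly from the existence half of Theorem~\ref{Thm.2}. That theorem already guarantees, for any equation of the form \eqref{linear.eq} with $p\ge 2$, a transcendental entire solution of order $\rho_j$ and type $L_j$ for each admissible index $j$. Thus it suffices to choose the coefficients $P_0,\dots,P_m$ so that the distinguished sequence \eqref{sequence.eq} has exactly two terms, i.e.\ $p=2$, and so that the single resulting pair $(\rho_1,L_1)$ defined by \eqref{orderlist.eq} and \eqref{typelist.eq} coincides with the prescribed $(\lambda,\sigma)$. The whole problem then reduces to solving the arithmetic in \eqref{orderlist.eq} and \eqref{typelist.eq} for admissible degrees and leading coefficients, after which Theorem~\ref{Thm.2} supplies the solution.

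Concretely, I would write $\lambda=a/b$ with integers $0<a<b$, take $m=b$, set the intermediate coefficients $P_1\equiv\cdots\equiv P_{b-1}\equiv 0$, and keep only the two extreme coefficients
$$P_b(z)=z^{\,b-a},\qquad P_0(z)=(\lambda\sigma)^b,$$
so that the candidate equation is
$$z^{\,b-a}\,\Delta^b f(z)+(\lambda\sigma)^b f(z)=0.$$
Here $d_b=b-a$ and $d_0=0$, and the relevant leading coefficients are $A_{b,b-a}=1$ and $A_{0,0}=(\lambda\sigma)^b$.

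The remaining step is the bookkeeping that confirms this choice realizes the intended two-term sequence. Since $b-a\ge 1$, the degree $d_b=b-a$ is the unique maximal degree, so \eqref{s1.eq} forces $s_1=b$ and hence $d_{s_1}-s_1=-a$; then $d_0-0=0>-a$ together with $d_0=\max_{0\le k<b}d_k$ yields $s_2=0$ through \eqref{sj.eq}--\eqref{sj2.eq}, and no further term exists, so $p=2$. Substituting into \eqref{orderlist.eq} gives
$$\rho_1=1+\frac{d_0-d_b}{b-0}=1-\frac{b-a}{b}=\frac ab=\lambda,$$
and since the exponent in \eqref{typelist.eq} equals $\rho_1/\bigl((d_0-0)-(d_b-b)\bigr)=(a/b)/a=1/b$, we obtain
$$L_1=\frac{1}{\lambda}\left|\frac{A_{0,0}}{A_{b,b-a}}\right|^{1/b}=\frac{1}{\lambda}\bigl((\lambda\sigma)^b\bigr)^{1/b}=\frac{1}{\lambda}\,\lambda\sigma=\sigma.$$
Thus $(\rho_1,L_1)=(\lambda,\sigma)$, and the existence part of Theorem~\ref{Thm.2} provides a transcendental entire solution $f$ of the displayed equation with $\rho(f)=\lambda$ and $\tau(f)=\sigma$.

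The analytic work is entirely absorbed into Theorem~\ref{Thm.2}, so no growth estimate needs to be redone; the only delicate point is the combinatorial verification that the chosen degrees produce the two-term distinguished sequence with the intended indices $s_1=b$, $s_2=0$ and not a longer one, and that the convention for the degree of the zero polynomial does not interfere with \eqref{sj2.eq}. Should that convention be inconvenient, I would instead take $P_1,\dots,P_{b-1}$ to be nonzero constants; the computation of $s_1,s_2,\rho_1,L_1$ is unchanged, since then $\max_{0\le k<b}d_k=0$ is attained at $k=0$ and the minimal index $i$ with $d_i-i>-a$ and $d_i=0$ is again $i=0$, giving $s_2=0$.
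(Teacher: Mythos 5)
Your proposal is correct, but it takes a genuinely different route from the paper. You derive the theorem as a corollary of the existence half of Theorem~\ref{Thm.2}: you engineer an equation $z^{b-a}\Delta^b f(z)+(\lambda\sigma)^b f(z)=0$ (with $\lambda=a/b$) whose Newton-polygon data collapse to the two-term sequence $s_1=b$, $s_2=0$, so that $p=2$ and the unique pair from \eqref{orderlist.eq}--\eqref{typelist.eq} is exactly $(\rho_1,L_1)=(\lambda,\sigma)$; your arithmetic checks out (exponent $\rho_1/a=1/b$, so $L_1=\lambda^{-1}(\lambda\sigma)=\sigma$), your handling of the degree-of-zero-polynomial convention via nonzero constant intermediate coefficients is sound, and there is no circularity, since the paper proves Theorem~\ref{Thm.2} in Section~\ref{Pf-Thm.2} without appealing to Theorem~\ref{construct.theorem}. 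The paper instead gives a self-contained explicit construction: writing $\lambda=q/p$, it sets up the shifted equation \eqref{pq.eq} with falling-factorial coefficients chosen via \eqref{fpq.eq} so that the coefficient recurrence telescopes to $a_{qt}=a_0\sigma^{pt}/(pt)!$, yielding the closed-form solution $f(z)=a_0\sum_{t=0}^\infty \frac{\sigma^{pt}}{(pt)!}z^{\underline{qt}}$, whose order and type are then verified directly from the $\chi$-formula of \cite[Theorem~1.1]{I&W 2021} and Theorem~\ref{Thm.1}, followed by a change of variable $z\mapsto z+q$ to reach the form \eqref{linear.eq}. Your reduction is shorter and requires no series computation, but it inherits the full weight of the machinery behind Theorem~\ref{Thm.2} (Adams' theory of irregular difference recurrences plus the cited results of Chiang--Feng and Ishizaki--Wen); the paper's argument buys an explicit solution and an independent verification of the predicted pair $(\rho_1,L_1)$, which serves as corroboration of Theorem~\ref{Thm.2} rather than a consequence of it. One stylistic bonus of your version: your equation is already literally of the form \eqref{linear.eq}, so the final shifting step the paper needs for \eqref{pq.eq} is unnecessary.
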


\section{Binomial series}

We recall and study the properties of binomial series,~\cite{I&Y 2004},~\cite{I&W 2021}.
Define $z^{\underline{0}}=1$ and
\begin{equation*}
z^{\underline{n}}=z(z-1)\cdots(z-n+1)=n!\begin{pmatrix}
z\\
n
\end{pmatrix},\quad n=1, 2, 3, \dots,
\end{equation*}
which is called a {\it falling factorial}.
This yields
\begin{equation*}
\Delta z^{\underline{n}}=(z+1)^{\underline{n}}-z^{\underline{n}}=nz^{\underline{n-1}}\, ,
\end{equation*}
which corresponds to $(z^n)'=nz^{n-1}$ in the differential calculus. Consider the formal series of the form
\begin{equation}\label{3.1}
f(z)=\sum_{n=0}^\infty a_n z^{\underline{n}},\quad a_n\in\mathbb C,\quad n=0,1,2, \dots.
\end{equation}
Let $\{\alpha_n\}$ be a sequence satisfying $|\alpha_n|\to 0$. We define a quantity concerning  $\{\alpha_n\}$ as
\begin{equation}
\chi(\{\alpha_n\})=\limsup_{n\to\infty}\frac{n\log n}{-\log|\alpha_n|}.\label{3.2}
\end{equation}
It was shown in \cite[Theorem 1.1]{I&W 2021} that if $\chi(\{a_n\})<1$, then the binomial series in \eqref{3.1} converges uniformly on every compact subset in $\mathbb C$. In addition,
 the order of growth of $f$ is $\rho(f)=\chi(\{a_n\})$.

Lindel\"of--Pringsheim theorem reveals the relation between the order of growth (or the type) and the coefficients of power series, see \cite[Pages 9--11]{Boas1954}, \cite[Chapter III]{Lindelof1902}, \cite{Lindelof1903}, \cite[Pages 260-263]{Pringsheim1904}. As for binomial series \eqref{3.1}, the sequence $\{a_n\}$ determines the function completely, it is possible for us to discover all the properties of the function by examining its coefficients.
It can be said that \cite[Theorem 1.1]{I&W 2021} is the analogue of Lindel\"of--Pringsheim theorem for binomial series on the order of growth. It is natural for us to consider the type in terms of coefficients of binomial series. Now let us state our result as follows.

\begin{theorem}\label{Thm.1}
Let $f$ be written as binomial series in \eqref{3.1} such that the sequence of coefficients $\{a_n\}$ satisfies $|a_n|\to 0$ and $\chi(\{a_n\})<1$ in \eqref{3.2}. Then $f$ converges in the complex plane. Moreover, if $f$ is of positive order $\rho=\rho(f)>0$, then $f$ is of type $\tau=\tau(f)$ satisfying
    \begin{equation}\label{Thm.1_E.1}
    \tau=\frac{1}{e\rho}\limsup_{n\rightarrow\infty}\left(n|a_{n}|^{\frac{\rho}{n}}\right).
    \end{equation}
\end{theorem}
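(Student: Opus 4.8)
The plan is to reduce the type of the binomial series $f$ to the classical Lindel\"of--Pringsheim type formula for the ordinary power series $g(z)=\sum_{n=0}^\infty a_n z^n$, which has the same coefficients and, by the order formula $\rho=\limsup_n\frac{n\log n}{-\log|a_n|}=\chi(\{a_n\})$, the same order $\rho$. The bridge is a pair of exact estimates comparing $M(r,f)$ with the products $\prod_{k=0}^{n-1}(r+k)$, combined with the fact that for $\rho<1$ these products differ from the plain powers $r^n$ only at a scale lower than $r^\rho$, the scale at which type is read off. I first record two building blocks. (A) For every $n$ and $r>0$ one has $\max_{|z|=r}|z^{\underline n}|=\prod_{k=0}^{n-1}(r+k)$, the maximum occurring at $z=-r$ because $|z-k|\le r+k$ with simultaneous equality there. (B) From $\Delta^n f(0)=n!\,a_n$ and a residue computation at the simple poles $z=0,1,\dots,n$, one obtains the Cauchy-type representation $a_n=\frac{1}{2\pi i}\oint_{|z|=R}\frac{f(z)}{z^{\underline{n+1}}}\,dz$ for every $R>n$; since $\min_{|z|=R}|z^{\underline{n+1}}|=\prod_{k=0}^{n}(R-k)$ (attained at $z=R$), this yields $M(R,f)\ge |a_n|\prod_{k=1}^{n}(R-k)$.

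Using (A) I obtain the upper estimate $M(r,f)\le\sum_n |a_n|\prod_{k=0}^{n-1}(r+k)$. Because $\chi(\{a_n\})=\rho<1$ forces $-\log|a_n|\sim\frac n\rho\log n$, the terms with $n$ beyond a fixed multiple of $r^\rho$ are super-exponentially small, so the sum is comparable to $r^\rho$ copies of its maximal term $\mu_f(r)=\max_n|a_n|\prod_{k=0}^{n-1}(r+k)$ and $\log M(r,f)\le\log\mu_f(r)+O(\log r)$. Writing $\log\prod_{k=0}^{n-1}(r+k)=n\log r+O(n^2/r)$ and noting that the maximizing index satisfies $n\asymp r^\rho$, the quadratic correction is $O(r^{2\rho-1})=o(r^\rho)$, so $\log\mu_f(r)=\log\mu_g(r)+o(r^\rho)$ with $\mu_g(r)=\max_n|a_n|r^n$. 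For the matching lower estimate I evaluate (B) at the central index $n\asymp R^\rho$ and use $\log\prod_{k=1}^{n}(R-k)=n\log R+o(R^\rho)$ to get $\log M(R,f)\ge\log\mu_g(R)-o(R^\rho)$. Together these give $\log M(r,f)=\log\mu_g(r)(1+o(1))$.

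It then remains to invoke the Lindel\"of--Pringsheim theorem for the entire function $g$ of order $\rho$: together with the standard relation $\log M(r,g)=\log\mu_g(r)(1+o(1))$ it gives $\limsup_{r\to\infty}\frac{\log\mu_g(r)}{r^\rho}=\tau(g)=\frac{1}{e\rho}\limsup_{n\to\infty} n|a_n|^{\rho/n}$, whence $\tau(f)=\frac{1}{e\rho}\limsup_n n|a_n|^{\rho/n}$, which is exactly \eqref{Thm.1_E.1}. The main obstacle I anticipate is the lower bound: a naive termwise estimate fails because the complex coefficients may cause cancellation in $\sum a_n z^{\underline n}$, and the lossy finite-difference inequality $|a_n|\le\frac{2^n}{n!}M(n,f)$ would corrupt the sharp constant $1/(e\rho)$; the contour representation (B) both sidesteps the cancellation and keeps the constant sharp. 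A secondary point requiring care is justifying rigorously that the indices relevant to $\mu_f(r)$ and to (B) lie in the range $n\asymp r^\rho$, so that the quadratic corrections are uniformly $o(r^\rho)$; the extreme cases $\tau\in\{0,\infty\}$, i.e. $\limsup_n n|a_n|^{\rho/n}\in\{0,\infty\}$, are then recovered by reading the same two inequalities as one-sided bounds.
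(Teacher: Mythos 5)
Your proposal is correct, but it takes a genuinely different route from the paper's self-contained argument. The paper proves the two inequalities $v\ge e\rho\tau$ and $v\le e\rho\tau$ (where $v=\limsup_n n|a_n|^{\rho/n}$) directly: for the upper bound it splits $\sum_n|a_n||z^{\underline n}|$ at $n\le r^k$ with $\rho<k<(1+\rho)/2$, bounds $|z^{\underline n}|\le r^ne^{nr^{k-1}}$, and performs the optimization $\max_x\left(\frac{v+\varepsilon}{x}\right)^{x/\rho}r^x=\exp\left(\frac{(v+\varepsilon)r^\rho}{e\rho}\right)$ inline; for the lower bound it uses exactly your representation (B) (which the paper quotes from \cite[Remark~2.2]{I&Y 2004}), the estimate $\prod_{k=1}^n(1-k/r)^{-1}\le\exp\left(\frac{n(n+1)}{r}+\frac{n(n+1)^2}{r^2}\right)$, and the explicit radii $r_j^\rho=n_je/(v-\varepsilon)$. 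You instead transfer everything to the companion power series $g(z)=\sum a_nz^n$ and invoke the classical Lindel\"of--Pringsheim formula, which buys brevity and reuse of known theory, at the cost of having to justify the transfer $\limsup_r\log M(r,f)/r^\rho=\limsup_r\log\mu_g(r)/r^\rho$; the analytic core is nonetheless the same in both proofs, namely $\log\prod_{k=0}^{n-1}(r\pm k)=n\log r+O(n^2/r)$, harmless at scale $r^\rho$ precisely because $\rho<1$, together with the contour formula to defeat cancellation --- a pitfall you correctly identify. Three small points to tighten: first, $\chi(\{a_n\})=\rho$ is a $\limsup$, so it only forces $-\log|a_n|\ge(1/\rho-o(1))\,n\log n$ for all large $n$ (with near-equality along a subsequence), not $-\log|a_n|\sim\frac n\rho\log n$; fortunately only the one-sided bound on $|a_n|$ is used in your tail estimate. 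Second, the maximizing and central indices satisfy $n\le r^{\rho+\varepsilon}$ in general rather than $n\asymp r^\rho$, which still yields $n^2/r\le r^{2(\rho+\varepsilon)-1}=o(r^\rho)$ once $\varepsilon<(1-\rho)/2$ --- this is where $\rho<1$ enters and should be said explicitly. Third, the pointwise asymptotic $\log M(r,g)=\log\mu_g(r)(1+o(1))$ is stronger than needed and not obviously valid at every radius for lacunary series; the correct and standard statement $\limsup_{r\to\infty}\log\mu_g(r)/r^\rho=\tau(g)$, which follows from $\mu_g(r)\le M(r,g)\le\frac{1+\varepsilon}{\varepsilon}\,\mu_g((1+\varepsilon)r)$, is all your argument requires.
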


\begin{proof}
It follows that $f$ is an entire function of order $\rho(f)\in(0,1)$ by \cite[Theorem 1.1]{I&W 2021}.
Set
    $
    v=\limsup_{n\rightarrow\infty}\left(n|a_{n}|^{\frac{\rho}{n}}\right).
    $
We prove $v \geq e \rho \tau $ at first in \eqref{Thm.1_E.1}.
It is easy to see that the inequality holds when $v=\infty$. Now we assume that $0\leq v<\infty$. Thus, for any $\varepsilon > 0$, there exists $N_0>0$ such that
    $$
    |a_n| < \left( \frac{v + \varepsilon}{n} \right)^{\frac{n}{\rho}}
    $$
holds for $n>N_0$. Therefore, it yields that
    \begin{equation}\label{3.4}
    \begin{split}
    M(r,f) &\leq \left(\sum_{n=0}^{N_{0} - 1} + \sum_{n \ge N_{0}} \right) |a_{n}||z^{\underline{n}}|
    \leq c_{1}r^{N_{0} - 1} + \sum_{n=0}^\infty |a_{n}| |z^{\underline{n}}|\\
    &\leq c_{1}r^{N_{0} - 1} + \left(\sum_{n \leq r^k} + \sum_{n > r^k}\right)|a_{n}| |z^{\underline{n}}|\\
    &=  c_{1}r^{N_{0} - 1} + I_{1} + I_{2},
    \end{split}
    \end{equation}
where $c_{1}$ is a positive constant and $r=|z|$, and $k\in\R$ such that $\rho<k<(1+\rho)/2$. In the following, we proceed to estimate $I_1$ and $I_2$. It is known that $(1+x)^n\leq e^{nx}$ holds for $x>-1$ and $n\in\N^+$. Hence,
    \begin{equation}\label{3.5}
    \begin{split}
    I_{1} &= \sum_{n\leq r^k}|a_{n}| |z^{\underline{n}}|\leq \sum_{n\leq r^k}|a_{n}| (r+n)^n
    \leq \sum_{n\leq r^k}|a_{n}| (1+r^{k-1})^nr^n\\
    &\leq \sum_{n\leq r^k}|a_{n}| r^ne^{nr^{k-1}}
    \leq \sum_{n\leq r^k} |a_{n}| r^ne^{r^{2k-1}}
    \leq \sum_{n\leq r^k} \left(\frac{v+\varepsilon}{n}\right)^{\frac{n}{\rho}} r^ne^{r^{2k-1}}\\
    &\leq r^ke^{r^{2k-1}} \exp \left(\frac{(v +\varepsilon)r^{\rho}}{e\rho}\right).
     \end{split}
    \end{equation}
The last inequality is according to the fact that $(a/x)^{x/b}$ does not exceed its maximum, which is $e^{a/be}$, attained for $x=a/e$, where $a,b,c\in\R$. For $I_2$, it follows that
    \begin{equation}\label{3.6}
    \begin{split}
    I_{2}& =\sum_{n> r^k}|a_{n}| |z^{\underline{n}}|
    \leq \sum_{n > r^k} |a_{n}| (r+n-1)^{n}\\
    &\leq \sum_{n > r^k} |a_{n}| (n^{1/k}+n-1)^{n}
    \leq \sum_{n > r^k} |a_{n}| \left(2n^{\frac1k}\right)^{n}\\
    &\leq \sum_{n > r^k} \left(\frac{2(v+\varepsilon)^{\frac{1}{\rho}}}{n^{\frac{1}{\rho} -\frac1k }}\right)^{n} \leq \sum_{n > r^k} \left(\frac{2(v+\varepsilon)^{\frac{1}{\rho}}}{r^{\frac{k}{\rho} - 1}}\right)^{n}=O(1)
     \end{split}
    \end{equation}
holds for large $r$. From \eqref{3.4} to \eqref{3.6}, we deduce that for sufficiently large $r$
    $$
    M(r,f) \leq c_{1}r^{N_{0} - 1} + r^ke^{r^{2k-1}} \exp \left(\frac{(v +\varepsilon)r^{\rho}}{e\rho}\right) +O(1)
    \leq Kr^ke^{r^{2k-1}} \exp \left(\frac{(v +\varepsilon)r^{\rho}}{e\rho}\right),
    $$
where $K$ is a positive constant. Therefore, we have
    $$
    \tau = \limsup_{r \to \infty} \frac{\log M(r,f)}{r^{\rho}} \leq \frac{v+2\varepsilon}{e\rho}.
    $$
Now let $\varepsilon \to 0$. It gives us $v \ge e\rho\tau.$

\medskip			
In the following, we proceed to prove that $v \leq e\rho\tau$. It is easy to see that this inequality holds when $v=0$. Let us assume that $v>0$. For any given \( \varepsilon > 0 \), there exists a sequence $\{n_{j}\}$ satisfying $n_j\to\infty$ as $j\to\infty$ such that
    \begin{equation}\label{Pf_Thm1.1_E.1}
    |a_{n_{j}}| > \left(\frac{v-\varepsilon}{n_{j}}\right)^{\frac{n_{j}}{\rho}}.
    \end{equation}
Moreover, for $r > n$ we have
    \begin{align*}
    a_n &= \frac{1}{2\pi i} \int_{|\zeta|=r} \frac{f(\zeta)}{\zeta^{\underline{n+1}}} d\zeta \\
    &= \frac{1}{2\pi i} \int_{|\zeta|=r} \frac{f(\zeta)}{\zeta(\zeta-1)\cdots(\zeta-n+1)(\zeta-n)} d\zeta,
    \end{align*}
which corresponds to the Cauchy integral formula, see \cite[Remark~2.2]{I&Y 2004}. Hence,
    \begin{equation}\label{an.eq}
    |a_n|r^{n} \le \frac{r^{n}}{(r-1)(r-2)\cdots(r-n)} M(r,f)=\frac{M(r,f)}{\prod_{k=1}^{n}\left(1-\frac{k}{r}\right)}
    \end{equation}
for $r>n$. Using the inequality $\log(1 - x) \geq -x - x^2$  for $x < \frac{1}{2}$, we have
     \begin{align*}
    \sum_{k=1}^n \log\left(1 - \frac{k}{r}\right) &\geq -\sum_{k=1}^n \left(\frac{k}{r} + \frac{k^2}{r^2}\right)
    = -\left( \frac{n(n+1)}{2r} + \frac{1}{r^2} \cdot \frac{n(n+1)(2n+1)}{6}\right)\\
    &
    \geq -\left(\frac{n(n+1)}{r} + \frac{n(n+1)^2}{r^2}\right)
    \end{align*}
holds when $r>n$. Therefore, it yields that
    \begin{align}\label{Pf_Thm1.1_E.2}
    \frac{r^{n}}{(r-1)(r-2)\cdots(r-n)} &\leq \exp\left(\frac{n(n+1)}{r} + \frac{n(n+1)^2}{r^2}\right)
    \end{align}
holds when $r>2n$. Choose a sequence $\{r_j\}$ such that $r_{j}^{\rho} = \frac{n_{j}e}{v -\varepsilon}$. It gives us that $r_j>2n_j$ for $r_j>\left(2(v-\varepsilon)/e\right)^{1/(1-\rho)}$.
 From \eqref{Pf_Thm1.1_E.1} to \eqref{Pf_Thm1.1_E.2}, we conclude that when $r_j>\left(2(v-\varepsilon)/e\right)^{1/(1-\rho)}$, the sequence $\{r_j\}$ satisfies
 $r_j\to\infty$ as $j\to\infty$, and
    \begin{align*}
    \exp \left( \frac{(v- \varepsilon)r_{j}^{\rho}}{e\rho}\right) = \left(\frac{v-\varepsilon}{n_{j}}\right)^{\frac{n_{j}}{\rho}} r_{j}^{n_{j}}
    \leq |a_{n_{j}}|r_{j}^{n_{j}}
    \leq \exp\left(\frac{n_j(n_j+1)}{r_j} + \frac{n_j(n_j+1)^2}{r_j^2}\right)M(r_{j},f).
    \end{align*}
Therefore, we have
    \begin{align*}
    \frac{v-\varepsilon}{e\rho}
\leq \limsup_{j\to\infty}\left(\frac{n_j(n_j+1)}{r_j^{1+\rho}}+ \frac{n_j(n_j+1)^2}{r_j^{2+\rho}}+
\frac{\log M(r_{j},f)}{r_j^{\rho}}\right)
\leq \tau.
    \end{align*}
Let $\varepsilon \to 0$. It yields that $v \leq e\rho\tau$. We prove our assertion.
    \end{proof}
		
It was shown from Theorem~\ref{Thm.1} and \cite[Theorem~1.1]{I&W 2021} that for any real pairs $(\rho,\tau)$, where $0<\rho<1$, we can construct a binomial series of the form \eqref{3.1} of order of growth $\rho$ and type $\tau$. In the following, we give several examples to illustrate this technique.
			
\begin{example}
Let $0<\rho<1$. The binomial series
    $$
    f(z) =\sum_{n=0}^{\infty} \left(\frac{\log^\rho n}{n}\right)^{\frac n\rho} z^{\underline{n}}
    $$
converges in every compact subset in $\C$. In addition, the entire function $f$ is of order $\rho(f)=\rho$ and maximum type $\tau(f) = \infty$.
\end{example}

\begin{example}
Let $0<\rho<1$ and $0<\tau<\infty$. The binomial series
    $$
    f(z) =\sum_{n=0}^{\infty} \left(\frac{e\rho \tau}{n}\right)^{\frac n\rho} z^{\underline{n}}
    $$
converges in every compact subset in $\C$. Moveover, the entire function $f$ is of order $\rho(f)=\rho$ and mean type $\tau(f) = \tau\in(0,\infty)$.
\end{example}
				
\begin{example}
Let $0<\rho<1$. The binomial series
    $$
    f(z) =\sum_{n=0}^{\infty} \frac{1}{\left(n^{\frac1\rho}\log n\right)^{n}}z^{\underline{n}}
    $$
converges in every compact subset in $\C$. The entire function $f$ is of order $\rho(f)=\rho$ and minimum type $\tau(f) = 0$.
\end{example}
		
\section{Proof of Theorem~\ref{Thm.2}}\label{Pf-Thm.2}

Suppose that there exists a formal solution of \eqref{linear.eq} of the following form
    $$
    f(z)=\sum_{n=0}^\infty a_nz^{\underline{n}}.
    $$
Obviously, for any given $j\in\N^+$, we have
    $$
    \Delta^j f(z)=\sum_{n=0}^\infty a_{n+j}(n+j)^{\underline{j}}z^{\underline{n}}.
    $$
For any given $j=0,1,\ldots,m$, it follows from \cite[Corollary A.1]{I&W 2021} or \cite[Page 38]{KP2001} that the polynomial $P_j$ in \eqref{P.eq} is written as
    $$
    P_j(z)=H_{j,d_j}z^{\underline{d_j}}+H_{j,d_j-1}z^{\underline{d_j-1}}
    +\cdots+H_{j,1}z+H_{j,0},
    $$
where $H_{j,i}\in\C$ for $0\leq i\leq d_j$ and $H_{j,d_j}=A_{j,d_j}\neq 0$ in \eqref{P.eq}.
Moreover, it yields from \cite[Lemma~3.1]{I&W 2025} that
    \begin{equation}\label{4.1}
    P_j(z)\Delta^j f(z)=\sum_{n=0}^\infty \sum_{t=0}^{d_j}\sum_{k=0}^t
    a_{n+j}H_{j,t}\binom{t}{k}(n+j)^{\underline{k+j}}z^{\underline{n+t-k}}
    \end{equation}
for $j=1,\ldots,m$. Hence, it yields from \eqref{linear.eq} and \eqref{4.1} that
    \begin{equation}\label{sum.eq}
    \sum_{j=0}^m\sum_{n=0}^\infty \sum_{t=0}^{d_j}\sum_{k=0}^t
    a_{n+j}H_{j,t}\binom{t}{k}(n+j)^{\underline{k+j}}z^{\underline{n+t-k}}=0.
    \end{equation}
We set $i=t-k$ in \eqref{sum.eq} and obtain that
    \begin{equation}\label{4.2}
    \begin{split}
    &\sum_{j=0}^m\sum_{n=0}^\infty a_{n+j} (n+j)^{\underline{j}} \sum_{t=0}^{d_j}\sum_{k=0}^t H_{j,t}\binom{t}{k}n^{\underline{k}}z^{\underline{n+t-k}}\\
    =&\sum_{j=0}^m\sum_{n=0}^\infty a_{n+j}(n+j)^{\underline{j}}
    \sum_{i=0}^{d_j}\frac{\Delta^i(P_j(n))}{i!}z^{\underline{n+i}}=0.
    \end{split}
    \end{equation}
Now, let us assume that $d=\max_{0\leq j\leq m}\{d_j\}$. We reduce that from \eqref{4.2}
    \begin{equation}\label{4.3}
    \begin{split}
    &\sum_{j=0}^m\sum_{n=0}^\infty a_{n+j}(n+j)^{\underline{j}}
    \sum_{i=0}^{d_j}\frac{\Delta^i(P_j(n))}{i!}z^{\underline{n+i}}\\
    =&\left(\sum_{n=0}^{d-1}\sum_{i=0}^n+    \sum_{n=d}^\infty\sum_{i=0}^d\right)\sum_{j=0}^m
     a_{n-i+j}(n-i+j)^{\underline{j}}\frac{\Delta^i(P_j(n-i))}{i!}z^{\underline{n}}=0.
    \end{split}
    \end{equation}
Using the fact that $z^{\underline{n}}$ are linearly independent over the periodic field with period one for distinct $n$, we obtain from \eqref{4.3} that
    \begin{equation}\label{np1.eq}
    \sum_{i=0}^n\sum_{j=0}^m
     a_{n-i+j}(n-i+j)^{\underline{j}}\frac{\Delta^i(P_j(n-i))}{i!}=0
     \quad\text{for}\quad n< d,
     \end{equation}
and
    \begin{equation}\label{np2.eq}
    \sum_{i=0}^d\sum_{j=0}^m
     a_{n-i+j}(n-i+j)^{\underline{j}}\frac{\Delta^i(P_j(n-i))}{i!}=0
     \quad\text{for}\quad n\geq d.
     \end{equation}
We consider the asymptotic behaviour of $a_n$ for large $n$ from \eqref{np2.eq} and ignore \eqref{np1.eq}. We can assume that $\Delta^\alpha f(z)=0$ for $\alpha<0$. We see that $a_n$ satisfies a recurrence relation of $m+d$ order
    \begin{equation}\label{recu.eq}
    a_{n+m}Q(n,-m)+a_{n+m-1}Q(n,-m+1)+\cdots+a_{n-d}Q(n,d)=0,
    \end{equation}
where
    $$
    Q(n,i)=\sum_{j=0}^m\frac{(n-i)^{\underline{j}}}{(i+j)!}\Delta^{i+j}(P_j(n-j-i)).
    $$
Since $p\geq 2$ and the relation \eqref{s1.eq} to \eqref{sj2.eq} hold, we have
    \begin{equation}\label{Qdegree.eq}
    \begin{split}
    \deg{Q(n,k)}&\leq d_{s_{1}}-k~~\quad\text{for}\quad -m\leq k<d_{s_1}-s_{1};\\
    \deg{Q(n,k)}&=s_j\quad\quad\quad\quad\text{for}\quad k=d_{s_j}-s_j;\\
    \deg{Q(n,k)}&\leq d_{s_{j+1}}-k\quad\text{for}\quad d_{s_j}-s_j<k<d_{s_{j+1}}-s_{j+1};\\
    Q(n,k)&=0 ~\quad\quad\quad\quad\text{for}\quad d_{s_p}-s_p<k\leq d.
    \end{split}
    \end{equation}
According to \eqref{Qdegree.eq}, we rewrite \eqref{recu.eq} as
    \begin{equation}\label{recu4.eq}
     a_{n+m}Q(n,-m)+a_{n+m-1}Q(n,-m+1)+\cdots+a_{n-(d_{s_p}-s_p)}Q(n,d_{s_p}-s_p)=0.
    \end{equation}
Let us set
    $$
   a_{n+m}=\frac{x(n)}{[(n+m)!]^\mu},
    $$
where $\mu\in\R$. Then $x(n)$ satisfies the recurrence relation
    \begin{equation*}\label{xrelation.eq}
    x(n)T(n,\mu,0)+x(n-1)T(n,\mu,1)+\cdots+x(n-(m+d_{s_p}-s_p))T(n,\mu,m+d_{s_p}-s_p)=0,
    \end{equation*}
where
    $$
    T(n,\mu,i)=Q(n,i-m)[(n+m)^{\underline{i}}]^\mu
    $$
for $i=0,1,\ldots,m+d_{s_p}-s_p$. The highest power on $n$ of $T(n,\mu,i)$ is denoted by $\deg{T(n,\mu,i)}$. Obviously,
    $$
    \deg{T(n,\mu,i)}=\deg{Q(n,i-m)}+i\mu
    $$
for $i=0,1,\ldots,m+d_{s_p}-s_p$. Since we assume that $p\geq 2$, for given $j=1,2,\ldots,p-1$ and for any $\mu>1$, we deduce from \eqref{Qdegree.eq} that
    \begin{equation}\label{Tdegree.eq}
    \begin{split}
    &\deg{T(n,\mu,m+d_{s_{j+1}}-s_{j+1})}>\deg{T(n,i)}~~\text{for}\quad m+d_{s_j}-s_j<i<m+d_{s_{j+1}}-s_{j+1};\\
    &\deg{T(n,\mu,m+d_{s_1}-s_1})>\deg{T(n,i)} ~\quad\quad\text{for}\quad 0\leq i<m+d_{s_1}-s_{1}.\\
    \end{split}
    \end{equation}
Now let us choose $\mu_j=1/\rho_j$ for $j=1,2,\ldots,p-1$, we have
    \begin{equation}\label{Teq.eq}
    \deg{T(n,\mu_j,m+d_{s_j}-s_j)}=\deg{T(n,\mu_j,m+d_{s_{j+1}}-s_{j+1})}
    \end{equation}
for any $j=1,2,\ldots,p-1$. Moreover, if $p\geq 3$, it yields that for any given $j=1,2,\ldots,p-1$
    \begin{equation}\label{Tbig.eq}
    \begin{split}
    &\deg{T(n,\mu_j,m+d_{s_{j}}-s_{j})}-\deg{T(n,\mu_j,m+d_{s_k}-s_k)}\\
    =&s_{j}+(m+d_{s_{j}}-s_{j})\mu_j-s_k-(m+d_{s_k}-s_k)\mu_j\\
    =&(s_j-s_k)+(d_{s_j}-s_j+s_k-d_{s_k})\mu_j\\
    =&(s_j-s_k)\left(1-\left(\frac{d_{s_j}-d_{s_k}}{s_k-s_j}+1\right)\mu_j\right)\\
    >&(s_j-s_k)\left(1-\left(\frac{d_{s_j}-d_{s_{j+1}}}{s_{j+1}-s_j}+1\right)\mu_j\right)\\
    =&(s_j-s_k)(1-\mu_j/\mu_{j})\\
    =&0
    \end{split}
    \end{equation}
holds for $j+1<k\leq p$. Thus, from \eqref{Tdegree.eq} to
\eqref{Tbig.eq}, we deduce that for a given $j=1,2,\ldots,p-1$,
    \begin{equation}\label{4.13}
     \deg{T(n,\mu_j,m+d_{s_j}-s_j)}=\deg{T(n,\mu_j,m+d_{s_{j+1}}-s_{j+1})}
     >\deg{T(n,\mu_j,i)}
    \end{equation}
holds for $i\neq m+d_{s_j}-s_j$ and $i\neq m+d_{s_{j+1}}-s_{j+1}$.

Let us denote $y_x=\deg Q(n,x-m)$ for $x=m+d_{s_1}-s_1,\ldots,m+d_{s_p}-s_p$. Choose $x-$ and $y-$ axes, plot the points $(x,y_x)$ as in Figure 1. Construct broken lines $L$, convex upward, such that both ends of each segment of the line are points of the set $(x,y_x)$ and such that all points of the set lie upon or beneath the line, see e.g., \cite[P. 511]{Adams 1928}.

    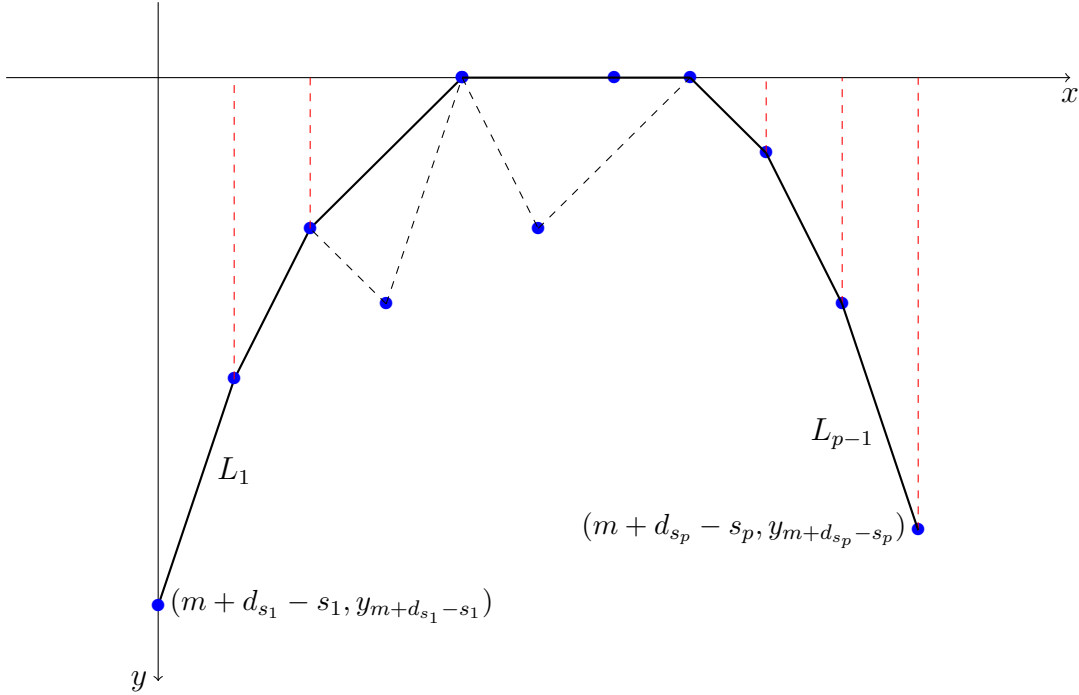
\begin{figure}[H]\label{Line.fi}
    \begin{center}
    \begin{tikzpicture}[scale=1]
    \draw[->](-2,0)--(12,0)node[left,below]{$x$};
    \draw[->](0,1)--(0,-8)node[left]{$y$};
    \draw[.](0,0)--(0,0);
    \draw[-,thick](0,-7)--(1,-4)node[below=25pt]{$L_1$};
    \draw[-,thick](0,-7)--(0,-7)node[below,right]{\small{($m+d_{s_1}-s_1,y_{m+d_{s_1}-s_1}$)}};
    \draw[-,thick](1,-4)--(2,-2);
    \draw[-,thick](2,-2)--(4,0);
    \draw[.](0,-7)node{\blue$\bullet$};
    \draw[.](1,-4)node{\blue{$\bullet$}};
    \draw[.](2,-2)node{\blue{$\bullet$}};
    \draw[.](4,0)node{\blue{$\bullet$}};
    \draw[.](4,0)node{\blue{$\bullet$}};
    \draw[.](3,-3)node{\blue{$\bullet$}};
    \draw[.](5,-2)node{\blue$\bullet$};
    \draw[.](6,0)node{\blue{$\bullet$}};
    \draw[.](7,0)node{\blue{$\bullet$}};
    \draw[.](8,-1)node{\blue{$\bullet$}};
    \draw[.](9,-3)node{\blue{$\bullet$}};
    \draw[.](10,-6)node{\blue{$\bullet$}};
    \draw[draw=red,dashed](1,-4)--(1,0);
    \draw[draw=red,dashed](2,-2)--(2,0);     \draw[draw=red,dashed](8,-1)--(8,0);
    \draw[draw=red,dashed](9,-3)--(9,0);     \draw[draw=red,dashed](10,-6)--(10,0);
    \draw[dashed](3,-3)--(2,-2);
    \draw[dashed](3,-3)--(4,0);
    \draw[dashed](4,0)--(5,-2);
    \draw[dashed](5,-2)--(7,0);
    \draw[-,thick](4,0)--(7,0);
    \draw[-,thick](7,0)--(8,-1);
    \draw[-,thick](8,-1)--(9,-3)node[below=39pt]{$L_{p-1}$};
    \draw[-,thick](9,-3)--(10,-6)node[below,left]{\small{($m+d_{s_p}-s_p,y_{m+d_{s_p}-s_p}$)}};
    \end{tikzpicture}
    \end{center}
	\begin{quote}
    \caption{Convex curve of linear difference equations \eqref{recu4.eq}}
	\end{quote}
    \end{figure}

Obviously, the number of these segments $L$ in Figure 1 is $p-1$ by \eqref{4.13}.
The segments are denoted by $L_1,\ldots,L_{p-1}$, respectively, and slopes of such segments are $\mu_1,\ldots,\mu_{p-1}$ , which are rational numbers.
Due to \cite{Adams 1928} the characteristic equation associated with that segment $L_j$ is
    \begin{equation}\label{ch.eq}
    B_{j+1}\gamma_j^{m+d_{s_j}-s_j}+B_{j}\gamma_j^{m+d_{s_{j+1}}-s_{j+1}}=0,
    \end{equation}
where $B_{j}$ is the coefficient of $n^{s_j}$ in $Q(n,d_{s_j}-s_j)$. In addition, since $m\geq s_j$ and \eqref{s1.eq} to \eqref{sj2.eq}, we have
    \begin{align*}
    Q(n,d_{s_j}-s_j)&=\sum_{k=0}^m\frac{(n-d_{s_j}+s_j)^{\underline{k}}}{(d_{s_j}-s_j+k)!}
    \Delta^{d_{s_j}-s_j+k}(P_k(n-j-d_{s_j}+s_j))\\
    &=\sum_{k=0}^{s_j}\frac{(n-d_{s_j}+s_j)^{\underline{k}}}{(d_{s_j}-s_j+k)!}
    \Delta^{d_{s_j}-s_j+k}(P_k(n-j-d_{s_j}+s_j))\\
    &=A_{s_j,d_{s_j}}n^{s_j}+O(n^{s_j-1}).
    \end{align*}
It implies that $B_j=A_{s_j,d_{s_j}}$ for $j=1,\ldots,p-1$. In addition, there exist $\left(d_{s_{j+1}} - s_{j+1}\right) - \left(d_{s_j} - s_j\right)$ nonzero simple roots of equation \eqref{ch.eq}, which are denoted by $\gamma_{j, t}$ for $t = 1, 2, \ldots, \left(d_{s_{j+1}} - s_{j+1}\right) - \left(d_{s_j} - s_j\right)$. Then for a given $j = 1, 2, \ldots, p - 1$, we find at most $\left(d_{s_{j+1}} - s_{j+1}\right) - \left(d_{s_j} - s_j\right)$ linearly independent solutions of asymptotic behaviour as
    \begin{equation}\label{a.eq}
    a_{n}^{(j, t)} \sim n^{-\mu_j n} e^{\mu_j n} \gamma_{j, t}^n e^{L_j(n)} n^{r_j},
    \end{equation}
where $r_j$ are constants and $L_j(n)$ are polynomials in $n^{1/j}$. Moreover, for any given $j=1,2,\ldots,p-1$, we have
    $$
    \chi(\{a_n^{(j,t)}\})=\limsup_{n\to\infty}\frac{n\log n}{-\log|a_n^{(j,t)}|}=\frac{1}{\mu_j}< 1.
    $$
It implies by \cite[Theorem~1.1]{I&W 2021} that binomial series $\sum a_n^{(j,t)}z^{\underline{n}}$ converges on every compact subset in $\C$.
Therefore, it follows from \eqref{ch.eq}, \eqref{a.eq} and Theorem~\ref{Thm.1} that
    \begin{align*}
    L_{j} &= \frac{1}{e\rho_{j}}\limsup_{n\to\infty} n|a_{n}^{(j,t)}|^{\frac{\rho_{j}}{n}}\\
     &= \frac{1}{e\rho_{j}}\limsup_{n\to\infty} n \left|n^{-\mu_j n} e^{\mu_j n} \gamma_{j, t}^n e^{L_j(n)} n^{r_j}\right|^{\frac{\rho_{j}}{n}}\\
    &= \frac{1}{\rho_j}\left|\frac{A_{s_{j+1},d_{s_{j+1}}}}{A_{s_{j},d_{s_{j}}}} \right|^{\frac{\rho_{j}}{\left(d_{s_{j+1}} - s_{j+1}\right) - \left(d_{s_j} - s_j\right)}}
    \end{align*}
for $j=1,\ldots,p-1$. We can finish our proof on the assertion by \cite[Theorem~7.3]{CF2009}, \cite[Theorem~4]{CF2016}
and \cite[Theorem~2.2]{I&W 2025}.
\hfill $\square$

\section{Proof of Theorem~\ref{construct.theorem}}

For any given positive rational number $0<\lambda<1$ and positive real number $\sigma$, we write
    $
    \lambda=q/p,
    $
where $p$ and $q$ are relatively prime positive integer such that $q<p$.
Set a linear difference equation
    \begin{equation}\label{pq.eq}
    A_pz^{\underline{p}}\Delta^p f(z-p)+\cdots+
    A_1z\Delta f(z-1)-A_0z^{\underline{q}}f(z-q)=0,
    \end{equation}
where $A_j$ are constants for $j=0,\ldots,p$.
Consider a formal solution $f(z)=\sum_{n=0}^\infty a_nz^{\underline{n}}$ of \eqref{pq.eq}. Since
    $$
    z^{\underline{k}}\Delta^m\left(\sum_{n=0}^\infty a_n(z-k)^{\underline{n}}\right)
    =\sum_{n=0}^\infty a_nn^{\underline{m}}z^{\underline{n-m+k}}
    $$
for any positive integer $k$ and $m$. We write \eqref{pq.eq} as
    \begin{equation*}\label{Apq.eq}
    A_p\sum_{n=0}^\infty a_nn^{\underline{p}}
    z^{\underline{n}}+\cdots+
    A_1\sum_{n=0}^\infty a_nnz^{\underline{n}}
    -A_0\sum_{n=q}^\infty a_{n-q}z^{\underline{n}}=0.
    \end{equation*}
Hence, it gives us that $a_1=\cdots=a_{q-1}=0$ and
    $$
    f(n)a_{n}=a_{n-q}
    $$
for $n\geq q$, where $f(n)=(A_pn^{\underline{p}}
+\cdots+A_1n)/A_0$. Hence, we see that $a_{qt-1}=\cdots=a_{qt-q+1}=0$ for $t\in\N$ by setting $n=qt$, and get
    \begin{equation}\label{aqt.eq}
    f(qt)a_{qt}=a_{q(t-1)}.
    \end{equation}
Now let us set $f(qt)=(pt)^{\underline{p}}/\sigma^p$, namely,
we choose $A_0,\ldots,A_p$ such that
    \begin{equation}\label{fpq.eq}
    \frac{A_pn^{\underline{p}}
+\cdots+A_1n}{A_0}=\left(\frac{n}{\lambda}\right)^{\underline{p}}\frac{1}{\sigma^p}.
    \end{equation}
It is easy to see that $A_0/A_p=(\lambda\sigma)^p$ and $A_0/A_1=(-1)^{p-1}\lambda\sigma^p/(p-1)!$ from \eqref{fpq.eq}. Thus, it follows from \eqref{aqt.eq} that
    $$
    a_{qt}=\frac{a_{q(t-1)}}{(pt)^{\underline{p}}}\sigma^p=\frac{a_{q(t-2)}}{(pt)^{\underline{p}}(p(t-1))^{\underline{p}}}\sigma^{2p}
    =\cdots=\frac{a_0}{(pt)!}\sigma^{pt}.
    $$
It implies that $f$ is of the form
    $$
    f(z)=a_0\sum_{t=0}^\infty \frac{\sigma^{pt}}{(pt)!}z^{\underline{qt}}.
    $$
By means of \cite[Theorem~1.1]{I&W 2021} and Theorem~\ref{Thm.1}, the formal solution $f$ converges to an entire function of order $\lambda=q/p$ and type $\tau=\sigma$, since
    \begin{equation*}\label{lambda.eq}
    \chi(\{a_{qt}\})=\limsup_{t\to\infty}\frac{qt\log(qt)}{\log\frac{(pt)!}{\sigma^{pt}}}=\frac{q}{p}=\lambda<1,
    \end{equation*}
and
    $$
    \tau(f)=\frac{p}{qe}\limsup_{t\to\infty}qt\left(\frac{\sigma^{pt}}{(pt)!}\right)^{\frac{1}{pt}}=\sigma.
    $$

We have thus proved that \eqref{pq.eq} possesses an entire solution of order $\lambda$ and type $\sigma$.
We set $z+q$ in place of $z$ in \eqref{pq.eq}, and use a formula
\begin{equation*}\label{summer.eq}
\Delta^m f(z+k)= \sum_{j=0}^m \frac{m!}{j!(m-j)!}(-1)^j\sum_{i=0}^{k+m-j}\frac{(k+m-j)!}{i!(k+m-j-i)!}\Delta^i f(z),
 \end{equation*}
for non-negative integers $m$ and $k$. Then we obtain a difference equation of the form \eqref{linear.eq}.

\section{Examples}
In this section we give several examples which illustrate our theorem.

\begin{example}
In \cite[Example~4.3]{I&W 2021} Ishizaki and the second author showed that there exists a
transcendental solution $f$ of order $\rho(f)=1/2$ of the difference equation
\begin{equation}\label{5.1}
(4z+6)\Delta^2 y(z)+3\Delta y(z)+y(z)=0.
\end{equation}
In particular, the entire solution $f$ is of the form
    $$
    f(z)=\sum_{n=0}^\infty \frac{(-1)^n}{(2n)!}z^{\underline{n}}.
    $$
In \cite{I&W 2025} Ishizaki and the second author showed that there exists at most one linearly independent transcendental entire solution of order $1/2$ of \eqref{5.1}.

In addition, it follows by Theorem \ref{Thm.1} that the entire function $f$ is of type
    $$
    \tau(f)=\frac{2}{e}\limsup_{n\to\infty}n\left(\frac{1}{(2n)!}\right)^{\frac{1}{2n}}=1.
    $$
In what follows, let us estimate the growth of entire solutions of order less than 1 of \eqref{5.1} by using Theorem~\ref{Thm.2}. We know $p=2$, $s_1=2$, $s_2=0$ from \eqref{5.1}. Hence, we have $\rho_1=1/2$ and $L_1=1$. It gives that there exists an entire solution of \eqref{5.1} of order $1/2$ and type $1$. Moreover, if $g$ is an entire solution of order $\rho(g)<1$ of \eqref{5.1}, then we have
    $$
    \log M(r,g)=r^{\frac12}(1+o(1)).
    $$
\end{example}

\begin{example}
In \cite[Remark 6.3]{I&Y 2004} Ishizaki and Yanagihara proved that there exists a transcendental entire solution $f$ of order $\rho(f)=1/3$ of the difference equation
    \begin{equation}\label{one.ex}
    (6z^2+19z+15)\Delta^3 f(z)+(z+3)\Delta^2 f(z)-\Delta f(z)-f(z)=0.
    \end{equation}
In \cite[Examples~7.1]{I&W 2025} Ishizaki and the second author proved that there exists at most one linearly independent transcendental entire solution of \eqref{one.ex} of order $1/3$.
Moreover, it was also shown that the entire solution is of the form
    $$
    f(z) = \sum _{n=0}^{ \infty } \frac {1}{6^n \Gamma (n+1) \Gamma (n-1/2) \Gamma (n-1/3)}.
    $$
Before we start to calculate the type of $f$, let us refer the asymptotic formula for the
Gamma function
    $$
    \Gamma(x)=x^{x-1/2}e^{-x}(2\pi)^{1/2}e^{\theta/(12x)},
    $$
where $0<\theta<1$. The result is known as Stirling's theorem, see e.g.,~\cite[Page 58]{Titchmarsh1939},~\cite[Page 253]{WW1927}.
It yields by Theorem \ref{Thm.1} that the entire function $f$ is of type
\begin{align*}
\tau(f) &= \frac{3}{e}\limsup_{n\to\infty} n\left(\frac {1}{6^n \Gamma (n+1) \Gamma (n-1/2) \Gamma (n-1/3)}\right)^{\frac{1}{3n}}\\
&= \frac{3}{e}\limsup_{n\to\infty}
n\left(\frac{e^{3n-\frac16}}{6^{n}(2\pi)^{3/2}(n+1)^{n+1/2}(n-1/2)^{n-1}(n-1/3)^{n-5/6}} \right)^{\frac{1}{3n}}\\
&= \frac{3}{\sqrt[3]{6}}.
\end{align*}
In what follows, let us estimate the growth of entire solutions of order less than 1 of \eqref{one.ex} by using Theorem~\ref{Thm.2}. We know that $p=2$, $s_1 = 3$, $s_2=0$. It yields that $\rho_1=1/3$ and $L_1=3/\sqrt[3]{6}$. Hence, there exists an entire solution of \eqref{one.ex} of order $1/3$ and type $3/\sqrt[3]{6}$. In addition, if $g$ is a transcendental entire solution of \eqref{one.ex} of order $\rho(g)<1$, then
    $$
    \log M(r,g)=\frac{3}{\sqrt[3]{6}}r^{\frac13}(1+o(1)).
    $$
\end{example}
			
\begin{example}
In \cite[Examples~7.2]{I&W 2025} Ishizaki and the second author showed that there exists a transcendental entire solution of order $\rho(f) = 3/4$ of the difference equation
\begin{multline}
(256 z^3+1920z^2+4656z+3640)\Delta^4 y(z)+(384 z^2+1760z+1944)\Delta^3 y(z)\\
-(80z+120)\Delta^2 y(z)-(81 z^2+ 405 z +446)\Delta y(z)-(81z^2+405z+486)y(z)=0.\label{two}
\end{multline}
In particular, the entire solution $f$ is of the form
    $$
    f(z) = \sum_{k=0}^{\infty} \frac{c}{(4k)!}z^{\underline{3k}},
    $$
where $c\in\C$. From Theorem~\ref{Thm.1}, we obtain that the type of the entire solution $f$ is
\begin{align*}
\tau(f) = \frac{4}{3e}\limsup_{k\to\infty} 3k \left(\frac{1}{(4k)!}\right)^{\frac{1}{4k}} = 1.
\end{align*}
In what follows, let us estimate the growth of entire solutions of order less than 1 of \eqref{two} by using Theorem~\ref{Thm.2}. We know that $p=2$, $s_1 = 4$, $s_2 = 0$. Hence, there exists an entire solution of \eqref{two} of order $3/4$ and type $1$.
In addition, if $g$ is an transcendent entire solution of \eqref{two} of order $\rho(g)<1$, then
we have
    $$
    \log M(r,g)=r^{\frac34}(1+o(1)).
    $$
\end{example}

	\bigskip
	\noindent
	\emph{X.-F.~Liu}\\
	\textsc{Shantou University, Department of Mathematics,\\
		Daxue Road No.~243, Shantou 515063, China}\\
	\texttt{e-mail:20xfliu@stu.edu.cn}
	
	\bigskip
	\noindent
	\emph{Z.-T.~Wen}\\
	\textsc{Shantou University, Department of Mathematics,\\
		Daxue Road No.~243, Shantou 515063, China}\\
	\texttt{e-mail:zhtwen@stu.edu.cn}
	
	\bigskip
	\noindent
	\emph{C.-X.~Zhu}\\
	\textsc{Shantou University, Department of Mathematics,\\
		Daxue Road No.~243, Shantou 515063, China}\\
	\texttt{e-mail:18cxzhu@stu.edu.cn}

\end{document}